\newtheorem{proposition}{Proposition}[section]
\newtheorem{remark}{Remark}[section]
\newtheorem{algorithm}{Algorithm}[section]
\newcommand{\ds}{\displaystyle}
\newcommand{\dZ}{{\cal Z \kern -0.7em Z}}
\newcommand{\dC}{{\rm\hbox{C \kern-0.8em\raise0.2ex\hbox{\vrule height5.4pt width0.7pt}}}}
\newcommand{\dQ}{{\rm\hbox{Q \kern-0.85em\raise0.25ex\hbox{\vrule height5.4pt width0.7pt}}}}
\newcommand{\proofbox}{\hspace{\fill}{$\Box$}}
\newenvironment{proof}{Proof.}{\proofbox}
\newcommand\old[1]{}
\newcommand{\beqa}{\begin{eqnarray*}}
\newcommand{\eeqa}{\end{eqnarray*}}
\title{\bf \Large{{A new mathematical model and algorithm for the optimal path to intercept a moving target}}}
\author{M. Akter  \thanks{Department of Mathematics, University of Chittagong, Chittagong-4331, Bangladesh,
   {\tt email: masuda@cu.ac.bd},}
\and
M. M. Rizvi \Letter \thanks{Centre for Smart Analytics (CSA), Institute of Innovation, Science and Sustainability, Federation University Australia, {\tt email:  m.rizvi@federation.edu.au}, (corresponding author)}
\and
M. Forkan \thanks{Department of Mathematics, University of Chittagong, Chittagong-4331, Bangladesh, 
{\tt email: forkan.math@cu.ac.bd},.}}
\begin{document}
%
\maketitle

\begin{abstract}
 \noindent This paper is concerned with determining the shortest path for a pursuer aiming to intercept a moving target travelling at a constant speed. To address this challenge, we introduce an  efficient mathematical model outlined as an optimal control problem. The proposed model is based on Dubin's path, where we concatenate two possible paths: a left-circular curve or a right-circular curve followed by a straight line. We develop and explore this model, providing a comprehensive geometric interpretation, and design an algorithm tailored to implement the proposed mathematical approach efficiently. Extensive numerical experiments involving diverse target positions highlight the strength of the model. The method exhibits a remarkably high convergence rate in finding solutions. We compare the proposed model and demonstrate its advantages through examples. For experiment purposes, we utilized the modelling software AMPL, employing a range of solvers to solve the problem. Subsequently, we simulated the obtained solutions using MATLAB, demonstrating the efficiency of the model in intercepting a moving target. The proposed model distinguishes itself by employing fewer parameters and making fewer assumptions, setting the model simplifies the complexities, and thus, makes it easier for experts to design optimal path plans. 
\end{abstract}

\textbf{Keywords.} Dubins path, Nonlinear programming problem, Optimal control problem, Path planning, Mathematical modelling, Numerical methods.

\textbf{AMS} subject classifications: 34K35 49M37, 90C30, 	90C39.

\section{Introduction}\label{sec1}
\noindent Path planning for intercepting a moving target has been a significant challenge, demanding efficient strategies for path planning to ensure successful interception. This has applications in various domains such as robotics, aerospace, and agricultural applications \cite{Wei2008}-\cite{Höffmann2024}.
The motivation behind the study of pursuer-target interception lies in real-world scenarios where one object needs to intersect with another.  Researchers are exploring advanced algorithms, mathematical models, and artificial intelligence methods to enhance the accuracy and speed of interception. Our study focus deeply into the essential realm of pursuer-target interception path planning. By exploring innovative mathematical models and employing advanced computational techniques, our aim is to contribute significantly in this area. We propose efficient ways to minimize the pursuer's path when intercepting a moving target. Our research endeavors not only towards theoretical advancements but also practical solutions that can be universally applied, addressing the evolving challenges associated with intercepting a moving target in various domains.

\noindent Dubins \cite{Dubins1957} introduced the shortest path for vehicles moving only forward and turning in unidirectional turns (either left or right) from specific starting to ending positions in the 2D plane. The author described paths as combinations of curves ($C$) and straight ($S$) lines, where the vehicle starts and ends with a turn, and in the middle, it could be straight or curved. There are six possible combinations of these paths, represented by $D=\{LSL, RSL, RSR, LSR, LRL, RLR\}$, where L and R mean left and right turns, respectively. Reeds and Shepp \cite{Reeds1990}  later expanded on Dubins work, allowing backward adaptability in these paths. Dubins results have since been further developed, incorporating geometric principles \cite{Ayala2014} and leveraging advanced theories in nonlinear optimal control \cite{Sussmann1991}-\cite{Chen2020}. Specifically, Pontryagin’s Maximum Principle (PMP) \cite{Pontryagin1962} has been applied, enhancing the understanding and applications of Dubins paths. These developments have expanded the scope of Dubins work, making it a pivotal foundation in the study of optimal vehicle paths. 

\noindent Numerous researchers have also been made an effort to develop a model to figure out the shortest distance for intercepting a moving target. Boissonnat and Bui's study \cite{Boissonnat1994} on the Relaxed Dubins Problem (RDP) focuses on finding the shortest Dubins path from a given configuration to a stationary target. Dubin's path problem from a fixed initial configuration to intercept a target have been extensively researched in several fields \cite{Madveev2011, Madveev2012}. When intercepting a moving target, researchers have also explored Dubins and relaxed Dubins optimal pathways \cite{Gopalan2016}-\cite{Buzikov2021}. Looker \cite{Looker2008} proposes a search algorithm to determine the shortest $CS$ type path for interception, assuming the pursuer behaves like a Dubins vehicle and the target moves at a constant speed. Meyer et al. \cite{Meyer2014} analyze problems related to minimum-time interception and interception at a predetermined time for a target moving in a straight line. Additionally, Meyer et al. \cite{Meyer2015} address finding the shortest path for a Dubins vehicle without a terminal angle constraint, referred to as the `relaxed Dubins' problem. In our research, we explore similar scenarios and attempt to reformulate various mathematical models for intercepting a moving target.\\

\noindent Kaya \cite{Kaya2017} studied the reformulation of the Dubins path based on the optimality principle, aiming to find a time-optimal, collision-free trajectory for the Dubins vehicle. The author extended this approach to compute Dubins interpolating curves, which are the shortest curvature-constrained paths through a given sequence of points \cite{Kaya2019}. Zheng \cite{Zheng2021} introduced a model focusing on the Minimum-Time Intercept Problem (MTIP). This problem involves guiding a Dubins vehicle from a specific location with a specified initial heading angle to intercept a moving target in the shortest time possible. Later, the author discussed the same scenario with prescribed impact angle constraints \cite{Zheng2022}.  Recently, Forkan et al. \cite{Forkan2022} introduced an innovative mathematical model to find a minimum path length or minimum time for touring a finite number of targets by unmanned aerial vehicle (UAV), both the UAV and targets are at the same altitude. The authors used the arc parameterization technique to determine the optimal path. We employ the same methodologies to develop the mathematical model for finding the optimal path of the pursuer’s pathway to intercept a moving target that travels through a straight line at a constant speed.
In recent times, Hou \cite{Hou2023} suggests a geometry-based impact time and angle control guiding rules without calculating the time-to-go that guarantees target captureability and zero terminal guidance command.

\noindent This paper proposes a mathematical model formulated as an optimal control problem, leveraging Dubin's path with a combination of circular and straight-line segments. The model's geometric interpretation is illustrated in Section \ref{geometry1}, and an efficient algorithm is developed to approximate the optimal path of the pursuer for intercepting a moving target. Extensive numerical experiments demonstrate the model's efficiency, high convergence rate, and advantages over existing methods. The proposed model is also suitable for minimizing the time it takes while a moving target is intercepted by a pursuer travelling at a constant speed. The model is expected to have the following useful features:
\begin{enumerate}
    \item [(i)]	When faced with a complex problem that involves numerous state and control variables, it's crucial for the method to provide approximations of optimal solutions. This is especially important in real-life applications where the problem-solving process can get increasingly intricate. The biggest hurdle in such scenarios is to develop an algorithm that can effectively and efficiently solve the problem which is the main motivation of this paper. 
    \item[(ii)] The method should construct an algorithm to generate solutions efficiently at a reasonable computational time. Computational time management in our analysis is important because solving a large problem can be very costly.
    \item[(iii)] The method should optimize the heading angles to determine the pursuer’s direction to generate the shortest path of interception. This is an essential feature since it guarantees that, given those heading angles, one may optimize the pursuer's path or time without considering each of the feasible paths separately. Therefore, our goal is to propose a mathematical model and increase its capacity for approximating the optimum trajectory in terms of length and time through the solution of the mathematical model.
\end{enumerate}

\noindent We are therefore interested in studying more about how the suggested model might construct the optimum path with minimal computational effort. The strengths and advantages of the proposed method with features (i)-(iii) are illustrated through the examples. The proposed model is compared with an existing model presented in \cite{Looker2008}, and the findings are demonstrated in Figures (\ref{PurTar1}-\ref{comparison}).

\noindent The rest of this paper comprises the following sections. In Section \ref{Formulation OCP}, we illustrate reformulation of Dubins problem as a time-optimal control problem and Pontryagin’s Maximum Principle. The geometrical interpretation of optimal path planning of a pursuer for intercepting a moving target is described in Section \ref{geometry1}.  In Section \ref{modelM}, the model for finding an optimal path of the pursuer to intercept a moving target is proposed. In Section \ref{algorithm_target}, we introduce an algorithm to implement the proposed model.  Numerical experiments are conducted and we provide the results and discussions based on numerical experiments in Section \ref{sec6}. The last section presents the conclusion of the paper.

\color{black}

\section{Formulation of Optimal Control Problem}\label{Formulation OCP}
In this section, the optimal control problem for the pursuer and moving target interception is formulated and control law is discussed using Pontryagin’s Maximum Principle. 

\noindent Let us consider the optimum path, denoted as $z(t):[t_0, t_f] \to \mathbb{R}^2$, starting from the initial position to the interception point. This path $z(t)$ measure the entire travel path for both the pursuer and the target until they meet each other.  We assume that the target is moving in a straight line with a constant speed and the pursuer does not face any obstacles during the motion. We also assume that the pursuer can not reverse its direction to intercept a target. The pursuer also has complete knowledge of the target's future trajectory and speed. 

\noindent We consider $(x(t),y(t))\in \mathbb{R}^2$ along $z(t)$ with $\dot x(t)= |\mathbf{V}| \cos \theta(t)$ and $\dot y(t)= |\mathbf{V}|\sin \theta (t)$, where $\theta(t)$ is the heading angle, which is measured from the $x$-axis through counterclockwise and $\theta(t)\in [0,2\pi]$, $|\mathbf{V}|$ is the speed, $R$ is the minimum turning radius of the pursuer.  

\noindent Let $t\in [t_0, t_f],$ where $t_f$ be the terminal time which is unknown and needs to be determined, and $t_0$ be the initial time. Therefore, the minimum path of the pursuer for intercepting a moving target is determined  by the arc length functional
\begin{equation}
\int_{t_0}^{t_f} \sqrt{\dot{x}^2+\dot{y}^2}\,dt = \int_{t_0}^{t_f} ||\dot{z}(t)||\,dt=|\mathbf{V}|(t_f-t_0) ,   
\end{equation}
 where, $||\dot{z}(t)||=|\mathbf{V}|,$ $\ds \dot{z}(t)=\frac{dz}{dt}$, and $||.||$ is the Euclidean norm.

\noindent Furthermore, the unit tangent vector is defined by $\ds T(t)=\frac{\dot{z}(t)}{||\dot{z}(t)||}=(\cos\theta(t),\sin\theta(t))$, and the curvature at each point of the path is calculated as  $\ds a=\frac{||\dot{T}(t)||}{||\dot{z}(t)||}=\frac{|\dot{\theta}(t)|}{|\mathbf{V}|}$.

\noindent The quantity $\ds \frac{\dot{\theta}(t)}{|\mathbf{V}|}$ can be positive or negative, and referred to as the signed curvature. If $\dot{\theta}(t) > 0$ then the pursuer moves in the counter-clockwise direction (left turns ($L$)) and if $\dot{\theta}(t) < 0$ then the pursuer moves in the clockwise direction (right turns  ($R$)),  and also $\dot{\theta}(t)=0$, if the pursuer moves in the straight line ($S$). Let $u(t) = \ds \frac{\dot{\theta}(t)}{|\mathbf{V}|}$ be a control variable.

\noindent Let us consider that heading angle of pursuer at the starting position $P(x_{P_0}, y_{P_0})$ is $\theta_{P_0}$, and the constant velocities of pursuer and target are $\bf{V_P}$ and $\bf{V_T}$ respectively.  

\noindent Hence the optimal control problem for the pursuer can be formulated as follows.
\begin{equation}\label{problem1}
\begin{array}{cl} \min & \ f =\ds \int_{t_0}^{t_f} |\mathbf{V_P}| \,dt =|\mathbf{V_P}| (t_f-t_0).  \\[4mm]
\mbox{subject to} 
\\[1mm]
&\  \dot{x}_P(t)=|\mathbf{V_P}| \cos\theta_P(t), \,x_P(t_0)= x_{P_0}, \\[2mm]
    &  \ \dot{y}_P(t)= |\mathbf{V_P}|\sin{\theta}_P(t),\, y_P(t_0)= y_{P_0} , \\[2mm]
   & \ \dot{\theta}_P(t) =u(t)|\mathbf{V_P}|, \, \theta_P(t_0)=\theta_{P_0},  \\[2mm]
 &  \ |u(t)|\leq a,
\end{array}\\
\end{equation}
\noindent where ($x_P(t)$, $y_P(t)$ $\theta_P(t)$) is represent the position along the path of pursuer.

\noindent Let's consider the target position is ($x_T(t)$, $y_T(t)$ $\theta_T$)  moving with constant speed $|\mathbf{V_T}|>0$ without making any turns or maneuvers. The direction of the target's movement is indicated by $\theta_T$, which belongs to  $[0, 2\pi]$. Since the target does not change its direction, the value of $\theta_T$ remains fixed throughout the engagement. Assuming the target's initial position is $(x_{T_0},y_{T_0})$, its equations of motion can be expressed as follows:

\begin{equation}\label{problemT}
\begin{array}{cl} 
   &\  \dot{x}_T(t)=|\mathbf{V_T}| \cos\theta_T, \,x_T(t_0)= x_{T_0}, \\[2mm]
    &  \ \dot{y}_T(t)= |\mathbf{V_T}|\sin\theta_T,\, y_T(t_0)= y_{T_0}. \\[2mm]
\end{array}\\
\end{equation}

\subsection{Pontryagin's Maximum Principle } \label{sec4a}

In this subsection, we will state the necessary conditions of optimality by Pontryagin's maximum principle \cite{Pontryagin1962} for Problem (\ref{problem1}) to obtain the optimal path for intercepting a target by a pursuer. 

\noindent Define the Hamiltonian function for Problem (\ref{problem1}) as follows. 

\begin{equation}\label{problemT1}
\begin{array}{cl} 
   \hspace{-58mm}  H(x_P(t),y_P(t),\theta_P(t),\mu_0,\mu_1(t),\mu_2(t),\mu_3(t),u(t))=  \\ [2mm]
     \mu_0 |\mathbf{V_P}| + \mu_1(t) |\mathbf{V_P}| \cos\theta_P(t)+ \mu_2(t) |\mathbf{V_P}| \sin\theta_P(t)+\mu_3(t) \,\ds u(t)\, \ds |\mathbf{V_P}|,  \\[2mm]
\end{array}\\
\end{equation}
\noindent where $\mu_0$ is a scalar parameter and $\mu_i:[t_0,t_f]\in \mathbb{R}, i=1,2,3, $ are the costate variables.\\
\noindent The costate variables are required to satisfy
\begin{equation}\label{PMP2}
    \dot{\mu}_{1}(t) = -H_{x_P}(t)=0,
\end{equation}
\begin{equation} \label{PMP3}
   \dot{\mu}_{2}(t) = -H_{y_P}(t)=0, 
\end{equation}
\begin{equation}\label{PMP4}
    \dot{\mu}_{3}(t) = -H_{\theta_P}(t) = \mu_1(t) \,|\mathbf{V_P}| \sin\theta_P(t)-\mu_2(t)\,|\mathbf{V_P}| \cos\theta_P(t). 
\end{equation}
\noindent In addition to (\ref{PMP2})-(\ref{PMP4}), one has $\dot{x}_P(t)=H_{\mu_1}(t)$, $\dot{y}_P(t)=H_{\mu_2}(t)$, $\dot{\theta}_P(t)=H_{\mu_3}(t).$ Note that from (\ref{PMP2}) and (\ref{PMP3}),  we have that $\mu_1(t)= \bar{\mu}_1$, $\mu_2(t)= \bar{\mu}_2$ for all $ t\in [t_0, t_f],$ where $\bar{\mu}_1$ and $\bar{\mu}_2$ are constants.

\noindent Moreover, the state differential equations  and other constraints given in (\ref{problem1}) and the adjoint differential equations (\ref{PMP2}) - (\ref{PMP4}), the following conditions hold:\\
\begin{equation}\label{PMP5}
  u(t)\in \underset{||w||\leq a}{arg\,min} \;H(x_P(t),y_P(t),\theta_P(t),\mu_0,\mu_1(t),\mu_2(t),\mu_3(t),w), 
\end{equation}
\begin{equation}\label{PMP6}
    H(t)=0.
\end{equation}
\noindent Thus, using (\ref{problemT1}), equation (\ref{PMP5}) can be written more simply as,
\begin{equation}\label{PMP7}
  u(t)\in \underset{||w||\leq a}{arg\,min} \;\;\mu_3(t)w, 
\end{equation}
\noindent and therefore, equation \eqref{PMP7} implies that the optimal controls $u(t)$ satisfy the following control law:\\
\begin{equation}\label{PMP8}
\hspace{20mm}
u(t) = 
\begin{cases} 
~~a,   &\text{if} \,\, \mu_3(t) < 0 \\
-a,  &\text{if}  \,\,\mu_3(t) > 0 \\
\mbox{undetermined}, &\text{if}\,\, \mu_3(t)= 0
\end{cases} 
\end{equation}
for all $t \in [t_0, t_f]$, where $\mu_3(t)$ acts as a function for switching between controls with two extremes $a$ and $-a$.  
\color{black}

\section {Interception of Pursuer and Moving Target: A Geometric Perspective} \label{geometry1}

\begin{figure}[hbt!]
	\begin{minipage}{100mm}
		\begin{center}
			\hspace*{15mm}
			\includegraphics[width=120mm]{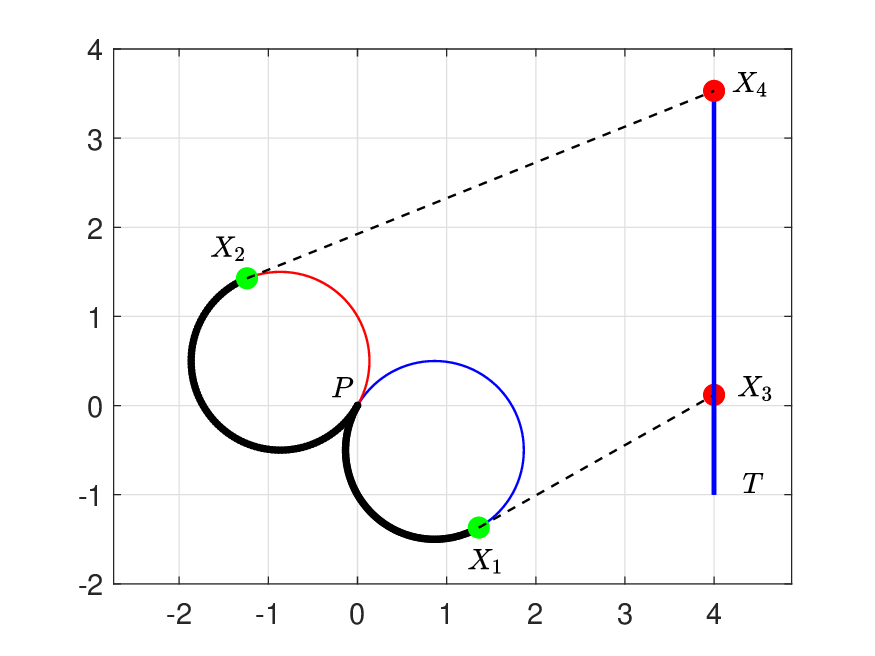} \\[3mm] \vspace{-12mm}
		 \vspace{4mm}
		\end{center}
	\end{minipage} \vspace{4mm}
	\caption{ \small{ Feasible optimal paths of interception of a moving target and a pursuer}.}
	\label{Figopti_path}
\end{figure}

This section illustrates the optimal path planning for a pursuer to intercept a moving target, where the pursuer and target are moving at constant speeds in two dimensions. The geometric interpretation for constructing the feasible paths of a pursuer for intercepting a moving target is demonstrated below.  

\noindent Let the pursuer and the target initiate their journey from the points $P(x_{P_0}, y_{P_0}, \theta_{P_0})$ and $T(x_{T_0}, y_{T_0}, \theta_{T_0})$, respectively and after certain period of time the pursuer intersect the target at a arbitrary unknown position. Following Dubin's path the pursuer adheres to a minimum turning radius, allowing it the flexibility to make left or right turns to optimize its path. According to Figure 1, the pursuer can follow either path $PX_1$ or $PX_2$ to travel across a circular trajectory. Subsequently, based on its choice of the left or right turn, the pursuer follows a straight-line path, either $X_1X_3$ or $X_2X_4$, with the aim of intercepting the moving target,  while maintaining a constant speed of $|\bf{V_P}|$. Simultaneously, the target is in motion at a constant speed of $|\bf{V_T}|$, moving towards its path. It is assumed that after a certain time interval $t$, both the pursuer and the target will intersect at a point denoted as $X_3$ or $X_4$. To ensure the success of these intercepts, we make the assumption that the pursuer's speed, $|\bf{V_P}|$, exceeds that of the target, $|\bf{V_T}|$. This creates two possible feasible paths of the type $CS+S_T$ which are as follows:

 \begin{equation}\label{geo_inter}
    CS+S_T\equiv
   \begin{cases}
   LS + S_T\equiv\mbox{arc}\;PX_1 + \mbox{st. line}\;X_1X_3 + \mbox{st. line}\;TX_3,&\text {or},\\
   RS + S_T\equiv\mbox{arc}\;PX_2 + \mbox{st. line}\;X_2X_4 + \mbox{st. line}\;TX_4,
  \end{cases}
\end{equation}

\section{Mathematical Model for Intercepting a Moving Target}\label{modelM}
In this section, we present a mathematical model to determine the most efficient path for a pursuer to intercept a moving target. According to the geometrical interpretation outlined in Section \ref{geometry1}, we formulate mathematical expressions to calculate the concatenated lengths. These lengths comprise two sub-paths $CS$ and $S_T$ types. Moreover, the optimal path for intercepting a moving target falls into one of the two specific cases detailed in the following set discussed in \eqref{geo_inter}.

\begin{equation}
    \{LS+S_T,RS+S_T\}.
\end{equation}

\noindent Let us assume an  initial time $t_0=0$ and a terminal time $t_f=t_3$ (at $X_3$ or $X_4$)  for the both pursuer and target, so that the length of each sub-path can be defined as $\xi_i=|\mathbf{V_P}|(t_i-t_{i-1})$, where $i=1, 2, 3$.  The notation $\xi_i=|\mathbf{V_T}|(t_{i-1}-t_{i})$, where $i=4$ represents the distance traveled by the target until it is intercepted by the pursuer. Now we solve the ordinary differential equations given in \eqref{problem1} for $x_P(t)$, $y_P(t)$, $\theta_P(t)$ during the time interval $t_{i-1}\leq t\leq t_i$, $i=1, 2, 3$ yields, 

\begin{equation}\label{solution1}
\begin{array}{cl} \ds \int_{t_{i-1}}^{t_{i}} \dot{x}_P(t) \, dt & \ =\ds \int_{t_{i-1}}^{t_{i}} |\mathbf{V_P}| \cos\theta_P (t) \,dt,  \\[4mm]
\ds \int_{t_{i-1}}^{t_{i}} \dot{y}_P(t) \, dt & \ =\ds \int_{t_{i-1}}^{t_{i}} |\mathbf{V_P}| \sin\theta_P(t) \,dt,  \\[4mm]
\ds \int_{t_{i-1}}^{t_{i}} \dot{\theta}_P(t) \, dt & \ =\ds \int_{t_{i-1}}^{t_{i}} |\mathbf{V_P}| u (t) \,dt.  \\[4mm]
\end{array}
\end{equation}

\noindent From \eqref{solution1}, we obtain the position $(x_P(t_i),y_P(t_i))$, $i=1,2$ along turning curve ($C$) yields

\begin{equation}\label{equ_6}
\begin{array}{cl} \ds x_P(t_i) & \ =\ds x_P(t_{i-1})+(\sin\theta_P(t_i)-\sin\theta_P(t_{i-1}))/(\dot{\theta}_P(t)/|\mathbf{V_P}|),  \\[4mm]
\ds y_P(t_i) & \ =\ds y_P(t_{i-1})-(\cos\theta_P(t_i)-
   \cos\theta_P(t_{i-1}))/(\dot{\theta}_P(t)/|\mathbf{V_P}|), \\[4mm]
\end{array}
\end{equation}
where 
 \begin{equation}\label{Opt.control}
\hspace{20mm}
\frac{\dot{\theta}_P(t)}{|\mathbf{V_P}|} =
\begin{cases} 
a   &\text{if} \,\, \dot\theta_P(t) > 0 \\
-a  &\text{if}  \,\, \dot\theta_P(t) < 0 \\
0 &\text{if}\,\, \dot\theta_P(t) = 0,
\end{cases} 
\end{equation}

\noindent The position $(x_P(t_i),y_P(t_i)),$ with $i=3$ at which the pursuer intercepts the  moving target along a straight line ($S$) can be described as

\begin{equation}\label{eq_5}
\begin{array}{cl} \ds x_P(t_i) & \ =\ds x_P(t_{i-1})+|\mathbf{V_P}|\cos\theta_P(t_{i-1})(t_i-t_{i-1}), \\[4mm]
\ds y_P(t_i) & \ =\ds y_P(t_{i-1})+|\mathbf{V_P}|\sin\theta_P(t_{i-1})(t_i-t_{i-1}). 
\end{array}
\end{equation}
\noindent Since the speed of the pursuer ($|\mathbf{V_P}|$) is constant over the time duration $[t_{i}-t_{i-1}]$, $~~i=3$, so the length of straight path for pursuer can be written as

\begin{equation}\label{solution3}
  \ds \xi_i=\ds |\mathbf{V_P}| (t_{i}-t_{i-1}), ~~i=3.
\end{equation}

\noindent Now using \eqref{solution3}, we rewrite \eqref{eq_5} as below:
\begin{equation}\label{equ_7}
\begin{array}{cl} \ds x_P(t_i) & \ =\ds x_P(t_{i-1})+\xi_i\cos\theta_P(t_{i-1}), \\[4mm]
\ds y_P(t_i) & \ =\ds y_P(t_{i-1})+\xi_i\sin\theta_P(t_{i-1}), 
\end{array}
\end{equation}

\noindent and heading angles of pursuer along the both curve ($C$) and straight line ($S$) can be obtained as
\begin{equation}\label{theta_solution}
\theta_P(t_i) =
\theta_P(t_{i-1})+(\dot{\theta}_P(t)/|\mathbf{V_P}|)|\mathbf{V_P}|
  (t_i-t_{i-1}), ~~i=1,2,
\end{equation}

\noindent where $u(t)=(\dot{\theta}_P(t)/|\mathbf{V_P}|)=a$, for left-turn circular arc, $u(t)=(\dot{\theta}_P(t)/|\mathbf{V_P}|)=-a$, for right-turn circular arc and $u(t)=(\dot{\theta}_P(t)/|\mathbf{V_P}|)= 0$, for straight line.

\noindent Using \eqref{equ_6}, \eqref{Opt.control}, and \eqref{equ_7}, the optimum path $CS$ of the pursuer for the time duration from $t_0$ to $t_3$ is obtained below.

\begin{equation}\label{path1}
    \ x_P(t_3)-x_P(t_0)=\frac{1}{a}(-\sin\theta_P(t_0)+2\sin\theta_P(t_1)-\sin\theta_P(t_2))+\xi_3\cos\theta_P(t_2),
\end{equation}
and
\begin{equation}\label{path2}
    \ y_P(t_3)-y_P(t_0)=\frac{1}{a}(\cos\theta_P(t_0)-2\cos\theta_P(t_1)+\cos\theta_P(t_2))+\xi_3\sin\theta_P(t_2).
\end{equation} 

\noindent Now, we calculate the target path for the time duration from $t_0$  to $t_3$ as below,

$$\int_{t_0}^{t_3}\dot{x}_T(t) dt=\int_{t_0}^{t_3}|\mathbf{V_T}| \cos \theta_T dt,$$
and 
$$\int_{t_0}^{t_3}\dot{y}_T(t) dt=\int_{t_0}^{t_3}|\mathbf{V_T}| \sin \theta_T dt,$$

\noindent By simplification the above relations and using $\ds \xi_4=\ds |\mathbf{V_T}| (t_3-t_0)$, we obtain the path for moving target as

\begin{equation}\label{path3}
    x_T(t_3)-x_T(t_0) = \xi_4\cos \theta_T,
\end{equation}
and
\begin{equation}\label{path4}
   y_T(t_3)-y_T(t_0) = \xi_4\sin \theta_T. 
\end{equation}

\noindent It is note that 
when a moving target is intercepted by the pursuer, their positions coincide. In other words, $(x_P(t_3),y_P(t_3))=(x_T(t_3),y_T(t_3))$ at the interception moment. As a result combining $\eqref{path1}$-$\eqref{path4}$ and thus we obtain,
\begin{equation}\label{path5}
    \ x_P(t_0)-x_T(t_0)+\frac{1}{a} (-\sin\theta_P(t_0)+2\sin\theta_P(t_1)-\sin\theta_P(t_2))+\xi_3\cos\theta_P(t_2)- \xi_4 \cos\theta_T = 0,
\end{equation}
and
\begin{equation}\label{path6}
   \ y_P(t_0)-y_T(t_0)+\frac{1}{a} (\cos\theta_P(t_0)-2\cos\theta_P(t_1)+\cos\theta_P(t_2))+\xi_3\sin\theta_P(t_2)- \xi_4 \sin\theta_T = 0.
\end{equation}

\noindent Since the pursuer and target move with separate constant velocities and intercept to each other at the same moment, we must include  an additional constraint to the optimization problem as 
\begin{equation}\label{equ_8} |\mathbf{V_T}|\sum_{i=1}^3\xi_i - |\mathbf{V_P}|\xi_4 =0.
\end{equation}

\noindent Considering equations \eqref{theta_solution} and \eqref{path5}-\eqref{equ_8}, we can formulate the following mathematical model for target interception.
\begin{equation}\label{model_1}
 \begin{array}{cl} \min &\ f =\ds \sum_{i=1}^4 \xi_i \\[2mm]
 \mbox{subject to} \\[2mm]
 & \ x_P(t_0)-x_T(t_0)+\ds \frac{1}{a} \left(-\sin\theta_P(t_0)+2\sin\theta_P(t_1)-\sin\theta_P(t_2)\right)+\xi_3\cos\theta_P(t_2)-\xi_4 \cos\theta_T = 0,\\[2mm]
 & \ y_P(t_0)-y_T(t_0)+\ds \frac{1}{a} (\cos\theta_P(t_0)-2\cos\theta_P(t_1)+\cos\theta_P(t_2))+\xi_3\sin\theta_P(t_2)-\xi_4 \sin\theta_T = 0,\\ [2mm]
 & \ |\mathbf{V_T}|(\xi_1+\xi_2+\xi_3) - |\mathbf{V_P}| \xi_4=0,\\ [2mm]
&\ \xi_i \geq 0, \;\;  i=1,2,3,4,
\end{array} 
\end{equation}
where 
\begin{equation*}
    \theta_{P_1}=\theta_{P_0} + a \xi_1, ~~ \theta_{P_2} =\theta_{P_1} - a \xi_2.  
\end{equation*}

\section{Proposed Optimal Path Algorithm}\label{algorithm_target}

 \color{black}
\noindent The possible pathways that described in \eqref{model_1} intersect a moving target satisfy the minimum turn radius criteria established in the following Proposition \eqref{turnradi}.

\begin{figure}[hbt!]
	\begin{minipage}{100mm}
		\begin{center}
			\hspace*{15mm}
			\includegraphics[width=120mm]{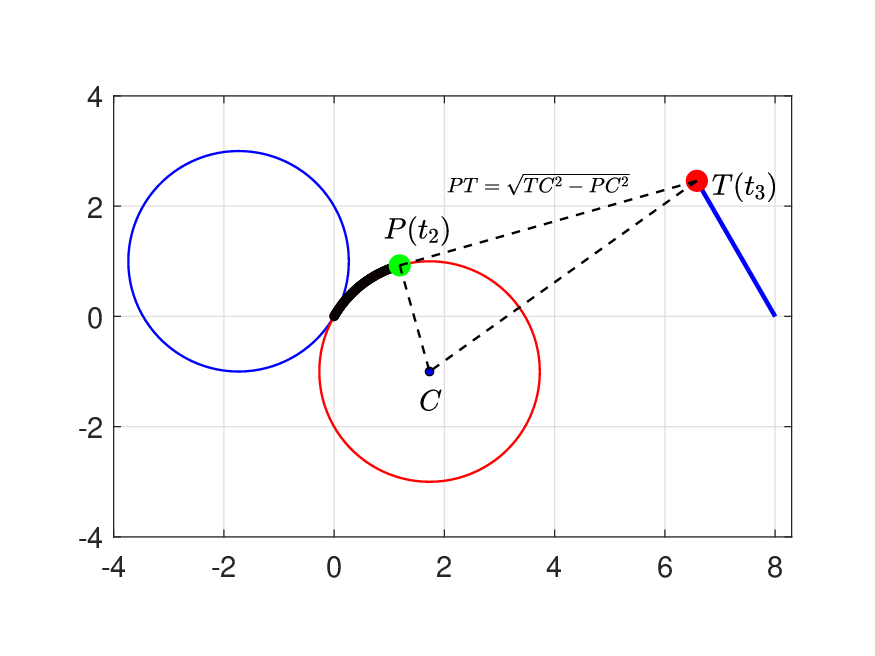} \\[3mm] \vspace{-12mm}
		 \vspace{4mm}
		\end{center}
	\end{minipage}
	\caption{ \small{ Area where pursuer can not intercept target}.}
	\label{fig_tour2}
\end{figure}
\begin{proposition}\label{turnradi}
    A target cannot be intercepted by the pursuer while it is inside its turning circle.
\end{proposition}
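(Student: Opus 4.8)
The plan is to argue geometrically using the fixed minimum turning radius $R = 1/a$ of the pursuer. The key observation is that when the pursuer executes a circular arc (a left turn $L$ or a right turn $R$), it traces the boundary of a disk of radius $R$ that is tangent to its initial heading direction at $P(x_{P_0},y_{P_0})$. There are exactly two such disks — one to the left of the heading vector and one to the right — and these are precisely the turning circles referenced in Figure~\ref{fig_tour2}. Since every feasible path in the model \eqref{model_1} is of type $CS$ (a single turning arc followed by a straight segment), the pursuer's trajectory starts on the boundary of one of these turning circles and then leaves along the tangent line at the point where the arc terminates.

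First I would set up coordinates so that the relevant turning circle has center $O$ with $\|O - P\| = R$ and $OP \perp (\cos\theta_{P_0},\sin\theta_{P_0})$. Next I would show that along the circular portion the pursuer stays exactly on the circle $\|z(t) - O\| = R$, which is immediate from integrating \eqref{equ_6} with $|\dot\theta_P|/|\mathbf{V_P}| = a$; and along the straight portion, because the pursuer departs tangentially and travels in a straight line, it moves monotonically away from $O$, so $\|z(t) - O\| \ge R$ for all $t$ after the arc begins. Hence the entire pursuer path (after entering the arc) lies in the closed exterior of the open turning disk $B(O,R)$; equivalently, the open disk $B(O,R)$ is never visited by the pursuer. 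Therefore, if the target's position at the interception time $t_3$ were to lie strictly inside $B(O,R)$, the equality $(x_P(t_3),y_P(t_3)) = (x_T(t_3),y_T(t_3))$ forced by \eqref{path5}--\eqref{path6} could not hold, giving the contradiction.

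The one subtlety — and the step I expect to require the most care — is handling the tangential straight segment correctly: I must confirm that a half-line starting on the circle $\partial B(O,R)$ and emanating in the tangent direction never re-enters the open disk. This follows because the tangent line to a circle meets the closed disk only at the single point of tangency, so any ray along (a sub-segment of) that tangent line, or more generally the forward straight ray from the arc's endpoint in the pursuer's heading direction, stays in the closed exterior; a short computation with the distance function $t \mapsto \|z(t)-O\|^2$ shows it is nondecreasing on the straight phase. A secondary point worth a sentence is that the statement should be read as: a target located strictly inside the turning circle at the moment of interception cannot be caught by a $CS$-type path; I would note that this is exactly the region shaded in Figure~\ref{fig_tour2}, and that it justifies excluding such configurations before invoking the algorithm of Section~\ref{algorithm_target}.
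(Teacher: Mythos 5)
Your argument is correct and follows essentially the same route as the paper: both proofs rest on the fact that the straight segment leaves the turning circle tangentially, so the radius to the exit point is perpendicular to the pursuer's subsequent straight path, and hence any point on that segment (in particular the interception point) is at distance $\sqrt{R^2+s^2}\ge R$ from the circle's center — the paper phrases this via the Pythagorean relation $PT=\sqrt{TC^2-PC^2}$ in the right triangle $TPC$, while your nondecreasing-distance computation along the tangent ray is the same identity. Your additional remarks (the arc itself stays on the circle, and the statement should be read as excluding interception points strictly inside the turning disk) are consistent refinements of the paper's shorter argument.
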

\begin{proof}
Let $C$ be the centre of the pursuer's turning circle, and before leaving the turning circle the pursuer's position $P(t)$  is $P(t_2)$. We assume that the pursuer and the target intercept at point $T(t_3)$. The radius of turning circle is $PC$ that perpendicular to the tangent $PT$ and $CT$ is the hypotenuse of the triangle $TPC$. Therefore, from Figure-\ref{fig_tour2} we have $PT=\sqrt{TC^2-PC^2}.$ Since $TC^2-PC^2\geq0$ and $PC \geq 0$, it implies that the target's distance from the center of the turning circle is always greater or equal to the radius of the pursuer's turning circle.
\end{proof}

Now we present the algorithm that implements our proposed model \eqref{model_1} to find the best path for a pursuer intercepting a moving target. In Step 2 of the algorithm, we minimize \eqref{model_1} using specified inputs: curvature $a$, and speeds $\mathbf{|V_P|}$ and $\mathbf{|V_T|}$ for pursuer and target, respectively. The optimal solution determines the lengths of the pursuer's left or right circular paths, along with the straight line from the circular path's exit point to the target intersection point. Using these lengths, we calculate the time required for each of those segments of the paths.

\noindent In Step (3) of the algorithm, we utilize the output obtained in Step (2) to determine the positions of both pursuer and target trajectories at time $t \in [t_o, t_f]$. Initially, in Step (3)(a), we locate the center of the turning radius circle of the pursuer, a necessary step for visualizing the turning curves. Subsequently, in Step (3)(b)(i) and (ii), we compute the positions $(x, y)$ of both the target and pursuer from their starting points up until the intercept occurs.
\newpage 
\begin{algorithm}\label{algorithm}
\begin{description}
\item[Step  1] 
{ \textbf{(Input)}} \\  Choose initial positions  $P(x_{P_0}, y_{P_0}, \theta_{P_0})$ and $T(x_{T_0}, y_{T_0}, \theta_{T_0})$ for a pursuer and a target respectively. Set turning radius $R$, curvature $\ds a=\frac{1}{R}$,  and speeds $\mathbf{|V_P|}$ and $\mathbf{|V_T|}$. Define variables $\xi_i$, $i=1,\dots, 4$.
\item[Step 2] {\textbf{(Determine the length from pursuer's initial point  to intercept point)}}\\
Let left turn angle $\theta_{P_1}=\theta_{P_0} + a \xi_1$ and right turn angle $\theta_{P_2} =\theta_{P_1} - a \xi_2$.  

Find $\xi:=(\bar{\xi}_1, \bar{\xi}_2, \bar{\xi}_3, \bar{\xi}_4)$ that solves Model \eqref{model_1}.

Set $\bar{F}$ := $[ \bar{\xi}_1, \bar{\xi}_2, \bar{\xi}_3, \bar{\xi}_4]$. Set $\bar{T} := [ \bar{t}_1, \bar{t}_2, \bar{t}_3, \bar{t}_4]$, where $\bar{t}_i=\bar{\xi}_i/|\mathbf{V_P}|$, $i=1,2,3.$, $\bar{t}_4=\bar{\xi}_4/|\mathbf{V_T}|$ and $f =\ds \sum_{i=1}^4 \bar{\xi}_i$.
 \item[Step 3] {\textbf{(Simulation)}} 
 \begin{itemize}
     \item [(a)] (Determining turning circle centers)\\
     Set centers for left-turn circle $(\alpha_L, \beta_L)$ and right-turn circle $(\alpha_R, \beta_R)$ where
 \[\alpha_L,~\alpha_R=x_{P_0}\pm R\sin{\theta_{P_0}},\]
 and
  \[\beta_L,~\beta_R=y_{P_0}\pm R\cos{\theta_{P_0}}.\]
  \item [(b)] (Generating points along the path)\\
  Calculate $\theta_{P_1}=\theta_{P_0} + a \bar{\xi}_1$ and $\theta_{P_2} =\theta_{P_1} - a \bar{\xi}_2$.\\
  \begin{itemize}
  \item [(i)] (Points along target trajectory)\\
  \textbf{St line:} Set $\hat{t}_0 = t_0$ and $j=1, \ldots, k.$
   \[\hat{t}_j:=\hat{t}_{j-1} + ( \bar{t}_4- t_0)/k,\]
  \[x_{T_j}=x_T+|\mathbf{V_T}|\cos(\theta_T)( \bar{t}_j- t_0),\]
  and
  \[y_{T_j}=y_T+ |\mathbf{V_T}|\sin(\theta_T)( \bar{t}_j- t_0).\]
  Set $\bar{X}_1=[x_{T_j}, y_{T_j}].$
\item [(ii)] (Points along pursuer trajectory)\\
  \textbf{Left-turn:} Set $\bar{\theta}_{P_0} = \theta_{P_0}$ and $i=1, \ldots, m.$ 
  \[\bar{\theta}_{P_i}:=\bar{\theta}_{P_{i-1}} + (\theta_{P_1}-\theta_{P_0})/m,\]
  \[x_{P_i}=x_{P_0}+ (\sin(\bar{\theta}_{P_i}) - \sin(\theta_{P_0}))/a,\]
  and
  \[y_{P_i}=y_{P_0}- (\cos(\bar{\theta}_{P_i}) - \cos(\theta_{P_0}))/a.\]
  Set $\bar{X}_2=[x_{P_i}, y_{P_i}].$
  
  \textbf{Right-turn:} Set $\hat{\theta}_{P_0} = \theta_{P_0}$ and $i=1, \ldots, m.$ 
  \[\hat{\theta}_{P_i}:=\hat{\theta}_{P_{i-1}} + (\theta_{P_2}-\theta_{P_0})/m,\]
  \[x_{P_i}=x_{P_0}- (\sin(\hat{\theta}_{P_i}) - \sin(\theta_{P_0}))/a,\]
  and
  \[y_{P_i}=y_{P_o}+ (\cos(\hat{\theta}_{P_i}) - \cos(\theta_{P_0}))/a.\]
  Set $\bar{X}_3=[x_{P_i}, y_{P_i}].$

  \textbf{St. Line:} Use \eqref{equ_7} and similar steps stated in Step~(b)(i).\\
  Set $\bar{X}_4=[x_{P_i}, y_{P_i}].$
  \end{itemize}
  
 \end{itemize}
 \item[Step 4] {\textbf{(Output)}} \\
  Target trajectory $\bar{X}_1$ and Pursuer trajectory $\bar{X}_2 \cup \bar{X}_3 \cup \bar{X}_4$.
\end{description}
    
\end{algorithm}
\color{black}
\section{ Numerical Experiments, Results and Analysis}\label{sec6}
In this section, we present the results obtained from the numerical experiments to validate the efficiency of our proposed model \eqref{model_1} for determining the optimal path for a pursuer to intercept a moving target. We provide initial locations and heading angles for both the pursuer and the target, considering their constant velocities, in order to calculate the optimal path for target interception.

\noindent In Example 1, we tested our proposed Algorithm \eqref{algorithm} with various combinations of pursuer and target positions, velocities, and turning radii. The pursuer's objective is to intercept the target in the shortest possible distance or time. As the turning radius is influenced by both angular and linear speeds, Example 2 tested the model, calculating the turning radius using the parameters such as pursuer and target positions, and  velocities. We explored different optimal path combinations by adjusting angular speeds, both increasing and decreasing them, to evaluate their impact on interception strategies. In experiments conducted in Examples 1 and 2, Algorithm \eqref{algorithm} demonstrated remarkable efficiency in computing optimal paths, as evidenced by Figures~\ref{PurTar1}--\ref{PurTar5} and the detailed results presented in Tables \ref{table:exp-1}-\ref{table:exp-5}.

\noindent In the experiments, we have implemented Algorithm \eqref{algorithm} writing codes in AMPL \cite{Fourer2003}, utilizing various solvers such as Ipopt \cite{Wachter2006} and KNITRO \cite{Knitro2006}, all with default settings, to solve the problems. Almost both solvers exhibited a high convergence rate in solving the proposed model. The obtained solutions are simulated by using MATLAB for the numerical experiments. In our analysis, the computations are conducted on an HP Evo laptop with 16 GB RAM and a Core i7 processor running at 4.6 GHz.

\noindent \textbf{Example 1} \label{example1}

\noindent Let an initial position for the pursuer, denoted as ~$P(0,0, 2\pi/3)$ and a target point $T(-5,0,\pi/2)$. The objective is for the pursuer $P$, moving at a constant speed $\mathbf{|V_P|}$, to intercept the target $T$, which is also moving at a constant speed $\mathbf{|V_T|}$. This interception is accomplished by finding the shortest path $f$, as determined by Algorithm \eqref{algorithm}.  It is important to note that we assume a minimum turning radius of $R=1$ for the pursuer's turning curve.  The pursuer takes time $t$ to intercept the target. 

\noindent The results of our analysis reveal that the $LS+S_T$ path proves to be the optimal choice to successfully achieve the task, as illustrated in Figure \ref{PurTar1}(a). Another feasible path is the $RS+S_T$ path, showcased in Figure \ref{PurTar1}(b). Table \ref{table:exp-1} provides a detailed breakdown of the lengths and time periods associated with each segment of these paths. Specifically, the total length of the path obtained by our algorithm $f=6.26$, and the corresponding time is $t=1.04$. In particular, the non-optimal path requires two times more length ($f=14.16$) and time ($t=2.36$), when compared to the optimal path, as illustrated in Table \ref{table:exp-1} and demonstrated in Figure \ref{PurTar1}(a) and (b).
The computational time taken by our system to derive the optimum path is around $0.078$ seconds.


  \par
	\begin{table}[!htbp]
		\caption{\small{\textit{Model Performance Variations for Optimal Path Generations. }}}
		\footnotesize
		\vskip 1.5em
		\centering
		\begin{tabular}{|c| c| c| c| c| c| c| c|c|c|c| }
  \hline
\multicolumn{1}{|c|}{$\mathbf{|V|}$ $-$ $R$}& \multicolumn{4}{|c|}{Length} & \multicolumn{4}{|c|}{Time} & \multicolumn{2}{|c|}{} \\
\hline
			\hline
			$\mathbf{|V_P|}=5$ & Left  &  Right  & St.		    	& Target & Left& Right& St. & Target & Total & Total \\
			$\mathbf{|V_T|}=1$ &     turn		&   turn  	& line & St. line & turn& turn &line& St. line & length & time  \\
			$\;~R\;\;=1$&$\bar{\xi}_1$ &     $\bar{\xi}_2$ 		&  $\bar{\xi}_3$   	& $\bar{\xi}_4$ & $\bar{t}_1$& $\bar{t}_2$ & $\bar{t}_3$ &$\bar{t}_4$ & $f$ & $t$ \\ [0.5ex]
			\hline
			Optimal Path & 0.92  & 0 & 4.30 & 1.04    &	0.18  & 0& 0.86 	& 1.04 &6.26 & 1.04 \\ [.5ex]\hline
			Feasible Path & 0 & 5.70   & 6.09 &2.36 & 0 &	1.14	& 1.21   &2.36 &14.16 & 2.36\\ [.5ex]\hline				
		\end{tabular}
		\label{table:exp-1}
	\end{table}
\begin{figure}[!htbp]
\vspace*{-20mm}
 \centering 
 
\includegraphics[width=130mm]{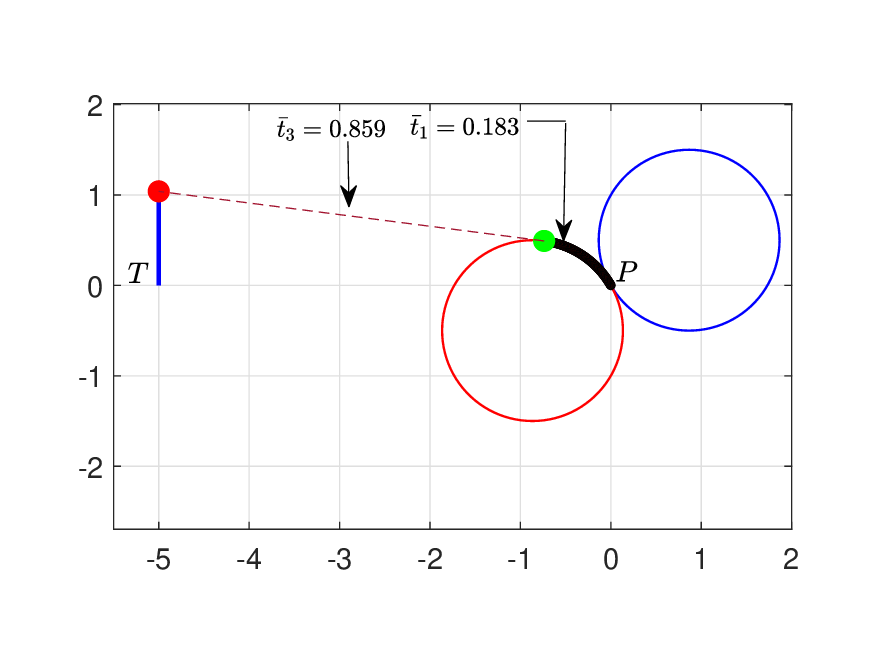} \\ \vspace{-.6cm}
{\footnotesize (a) Optimal path}.
\\
 \centering
 \hspace{.4cm}
\includegraphics[width=130mm]{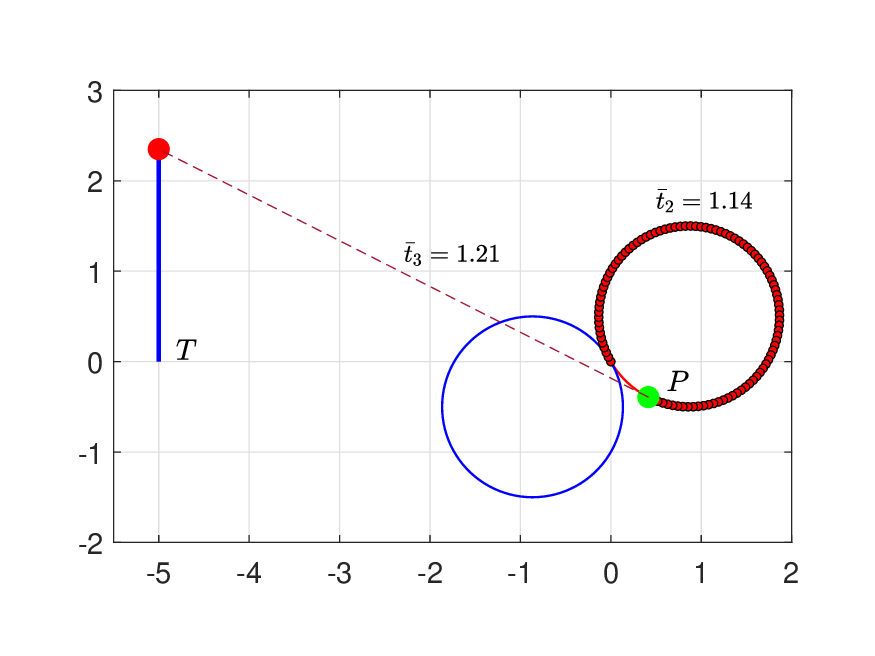} \\
{\footnotesize (b) Feasible path}.
\\[5mm]
\caption{\sf\small Optimizing the path of the pursuer, starting at position $P(0,0)$ with a heading angle of $\theta_P = 2\pi/3$, to intercept target $T(-5,0)$, which is located to the left of the pursuer and heading in the direction of $\theta_T = \pi/2$.}
\label{PurTar1}
\end{figure}



\noindent Now, we fixed the target point to the right side of the pursuer, specifically at coordinate $T(8, -2, 2\pi/3)$, while increasing the target's speed to $\mathbf{|V_T|}=2$. Additionally, we adjusted the initial angle of the pursuer to $\pi/3$. The pursuer's turning curve meets a minimum radius of $R=1$.

\noindent Our analysis concluded that the optimal choice to successfully achieve this task is the $RS+S_T$ path, as shown in Figure \ref{PurTar2}(a). Other feasible path is $LS+S_T$, displayed in Figure \ref{PurTar2}(b). Table \ref{table:exp-2} provides a comprehensive breakdown of the lengths and time intervals associated with each segment of these paths. Specifically, our algorithm yielded an optimal path length of $f=9.55$ and a corresponding time of $t=1.36$. Almost the previous analysis, the non-optimal path requires twice the length and time compared to the optimal path, as depicted in Figure \ref{PurTar2}(a) and \ref{PurTar2}(b). The computational time needed for our system to determine the optimal path is around $0.093$ seconds.


  \par
	\begin{table}[!htbp]
		\caption{\small{\textit{Model Performance Variations for Optimal Path Generations. }}}
		\footnotesize
		\vskip 1.5em
		\centering
		\begin{tabular}{|c| c| c| c| c| c| c| c|c|c|c| }
  \hline
\multicolumn{1}{|c|}{$\mathbf{|V|}$ $-$ $R$}& \multicolumn{4}{|c|}{Length} & \multicolumn{4}{|c|}{Time} & \multicolumn{2}{|c|}{} \\
\hline
			\hline
			$\mathbf{|V_P|}=5$ & Left  &  Right  & St.		    	& Target & Left& Right& St. & Target & Total & Total \\
			$\mathbf{|V_T|}=2$ &     turn		&   turn  	& line & St. line & turn& turn &line& St. line & length & time  \\
			$\;~R\;\;=1$&$\bar{\xi}_1$ &     $\bar{\xi}_2$ 		&  $\bar{\xi}_3$   	& $\bar{\xi}_4$ & $\bar{t}_1$& $\bar{t}_2$ & $\bar{t}_3$ &$\bar{t}_4$ & $f$ & $t$ \\ [0.5ex]
			\hline
			Optimal Path & 0  & 1.07 & 5.75 & 2.73    &	0  & 0.21& 1.15 	& 1.36 &9.55 & 1.36 \\ [.5ex]\hline
			Feasible Path & 5.65 & 0   & 6.63 &4.91 & 1.13 &	0	& 1.33   &2.46 &17.19 & 2.46\\ [.5ex]\hline

		\end{tabular}
		\label{table:exp-2}
	\end{table}
\begin{figure}[hbt!]
\vspace*{-15mm}
 \centering
\includegraphics[width=120mm]{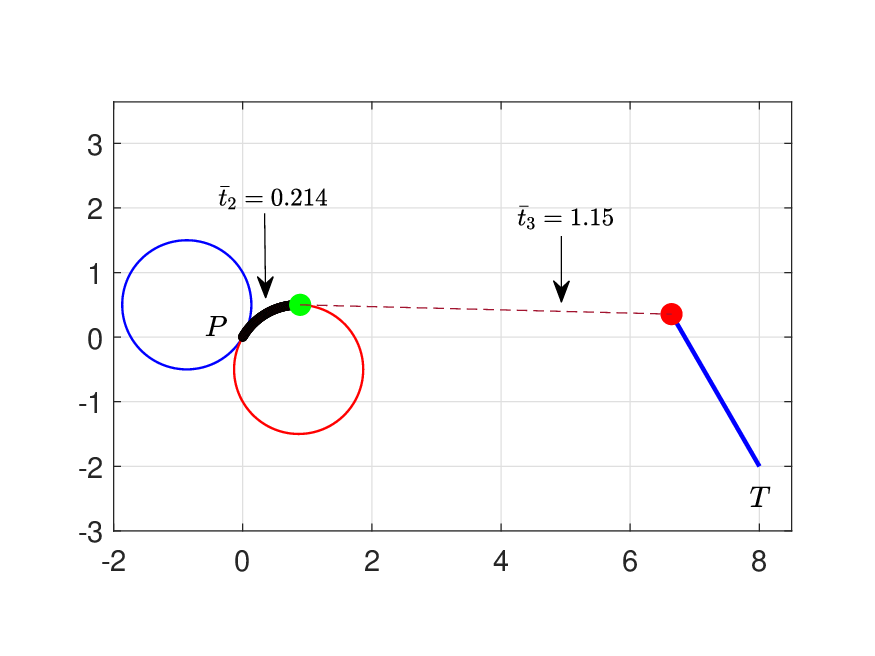} \\ \vspace{-.8cm}
{\footnotesize (a) Optimal path}.
\\
 \centering
 \hspace{-.4cm}
\includegraphics[width=120mm]{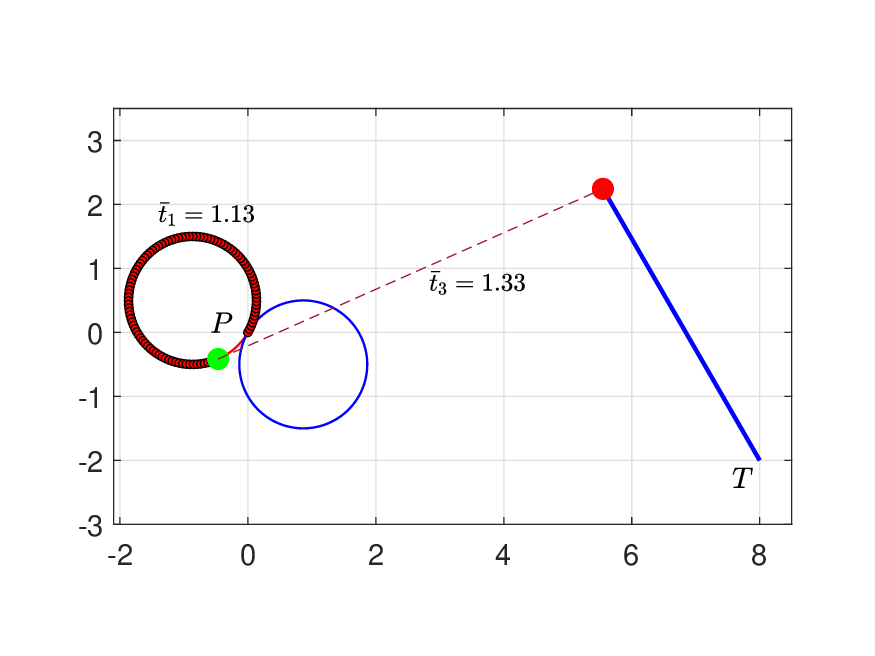} \\ \vspace{-1 cm}
{\footnotesize (b) Feasible path}.
\\[5mm]
\caption{\sf\small Optimizing the path of the pursuer, starting at position $P(0,0)$ with a heading angle of $\theta_P = \pi/3$, to intercept target $T(8,-2)$, which is located to the right of the pursuer and heading in the direction of $\theta_T = 2\pi/3$.}
\label{PurTar2}
\end{figure}

\noindent We further evaluated the adaptability of Algorithm \eqref{algorithm} by conducting tests in various directions, including scenarios where the target travels horizontally relative to the pursuer. The analysis for these scenarios is presented in Figure \ref{PurTar3}(a)(b) and data summarized in Table \ref{table:exp-3}, considering an initial position at $P(0,0, \pi/3)$ and $T(8,3, \pi)$. 
  \par
	\begin{table}[!htbp]
		\caption{\small{\textit{Model Performance Variations for Optimal Path Generations. }}}
		\footnotesize
		\vskip 1.5em
		\centering
		\begin{tabular}{|c| c| c| c| c| c| c| c|c|c|c| }
  \hline
\multicolumn{1}{|c|}{$\mathbf{|V|}$ $-$  $R$}& \multicolumn{4}{|c|}{Length} & \multicolumn{4}{|c|}{Time} & \multicolumn{2}{|c|}{} \\
\hline
			\hline
			$\mathbf{|V_P|}=5$ & Left  &  Right  & St.		    	& Target & Left& Right& St. & Target & Total & Total \\
			$\mathbf{|V_T|}=2$ &     turn		&   turn  	& line & St. line & turn& turn &line& St. line & length & time  \\
			$\;~R\;\;=1$&$\bar{\xi}_1$ &     $\bar{\xi}_2$ 		&  $\bar{\xi}_3$   	& $\bar{\xi}_4$ & $\bar{t}_1$& $\bar{t}_2$ & $\bar{t}_3$ &$\bar{t}_4$ & $f$ & $t$ \\ [0.5ex]
			\hline
			Optimal Path & 0  & 0.57 & 5.71 & 2.51    &	0  & 0.11& 1.14 	& 1.26 &8.80 & 1.26 \\ [.5ex]\hline
			Feasible Path & 5.94 & 0 & 5.03   & 4.39 &1.19 & 0 &	1.00	& 2.20   &15.36 &2.20 \\ [.5ex]\hline

		\end{tabular}
		\label{table:exp-3}
	\end{table}
\begin{figure}[!htbp]
\vspace*{-20mm}
 \centering
\includegraphics[width=130mm]{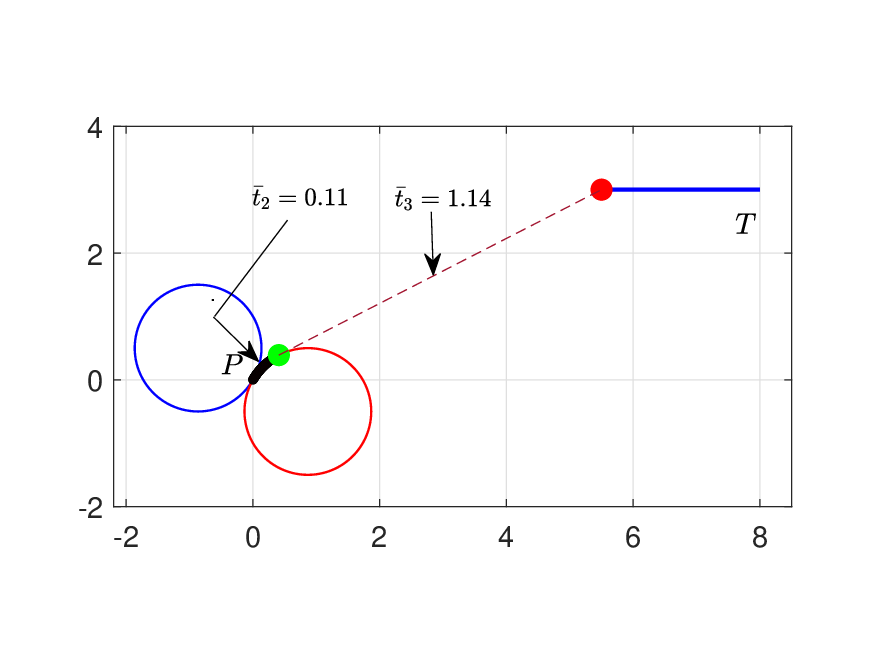} \\ \vspace{-.9cm}
{\footnotesize (a) Optimal path}.
\\ 
 \centering
\includegraphics[width=130mm]{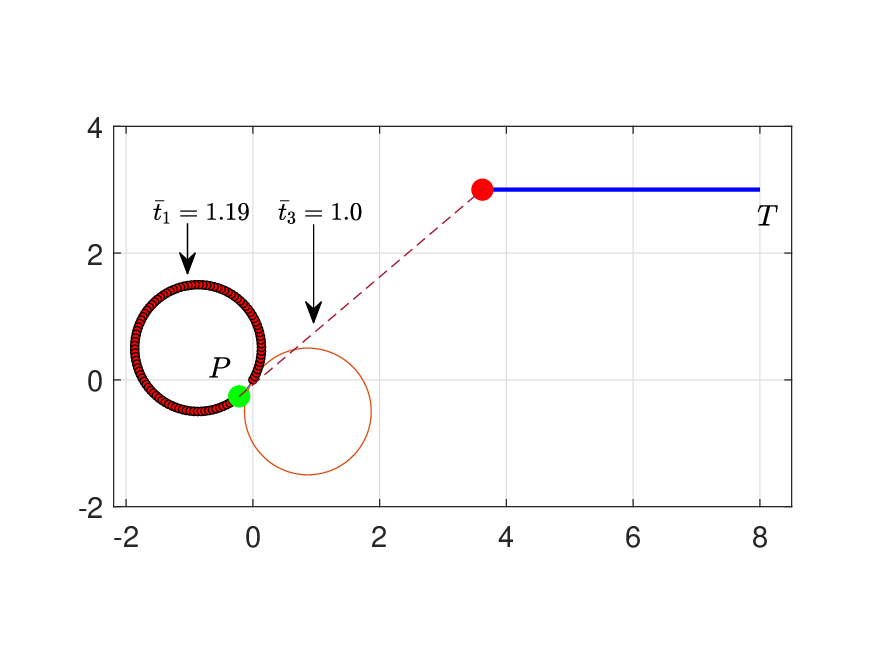} \\ \vspace{-.9cm}
{\footnotesize (b) Feasible path}.
\\[5mm]
\caption{\sf\small Optimizing the path of the pursuer, starting at position $P(0,0)$ with a heading angle of $\theta_P = \pi/3$, to intercept target $T(8,3)$, which is located to the right of the pursuer and heading in the direction of $\theta_T = \pi$.}
\label{PurTar3}
\end{figure}
\noindent In addition, we tested our proposed model \eqref{model_1} for larger turning radius $R=3$ and  various target directions which demonstrated in Figure \ref{PurTar4}(a)(b) and detailed of the results depicted in Table \ref{table:exp-4}, maintaining the pursuer and target positions at  $P(0,0, \pi/3)$ and $T(8,3, \pi)$, and also for the positions $P(0,0, \pi/3)$ and $T(-1,10, 3\pi/2)$ .
  \par
	\begin{table}[!htbp]
		\caption{\small{\textit{Model Performance Variations for Optimal Path Generations. }}}
		\footnotesize
		\vskip 1.5em
		\centering
		\begin{tabular}{|c| c| c| c| c| c| c| c|c|c|c| }
  \hline
\multicolumn{1}{|c|}{$\mathbf{|V|}$ $-$ $R$}& \multicolumn{4}{|c|}{Length} & \multicolumn{4}{|c|}{Time} & \multicolumn{2}{|c|}{Optimal} \\
\hline
			\hline
			$\mathbf{|V_P|}=5$ & Left  &  Right  & St.		    	& Target & Left& Right& St. & Target & Total & Total \\
			$\mathbf{|V_T|}=2$ &     turn		&   turn  	& line & St. line & turn& turn &line& St. line & length & time  \\
			$\;~R\;\;=3$&$\bar{\xi}_1$ &     $\bar{\xi}_2$ 		&  $\bar{\xi}_3$   	& $\bar{\xi}_4$ & $\bar{t}_1$& $\bar{t}_2$ & $\bar{t}_3$ &$\bar{t}_4$ & $f$ & $t$ \\ [0.5ex]
			\hline
			$\tiny P(0,0)~~ \to ~~T(8,3)$ & 0  & 1.92 & 4.41 & 2.53    &	0  & 0.38& 0.88 	& 1.27 &8.87 & 1.27 \\ [.5ex]\hline
			$P(0,0) \to T(-1,10)$ & 2.37 & 0 & 4.96   & 2.93 &0.47 & 0 &	0.99	& 1.46   &10.25 &1.46 \\ [.5ex]\hline

		\end{tabular}
		\label{table:exp-4}
	\end{table}

\begin{figure}[!htbp]
\vspace*{-20mm}

 \centering
\includegraphics[width=110mm]{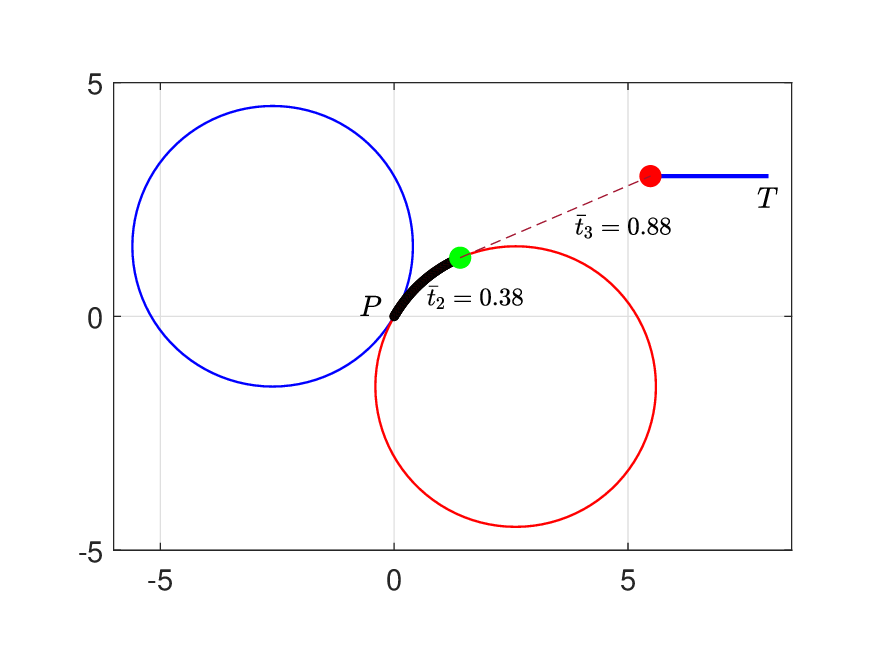} \\ \vspace{-.5cm}
{\footnotesize (a) Optimal path for pursuer $P(0,0, \pi/3)$ and target $T(8,3, \pi)$}.
\\[1mm]
 \centering
\includegraphics[width=130mm]{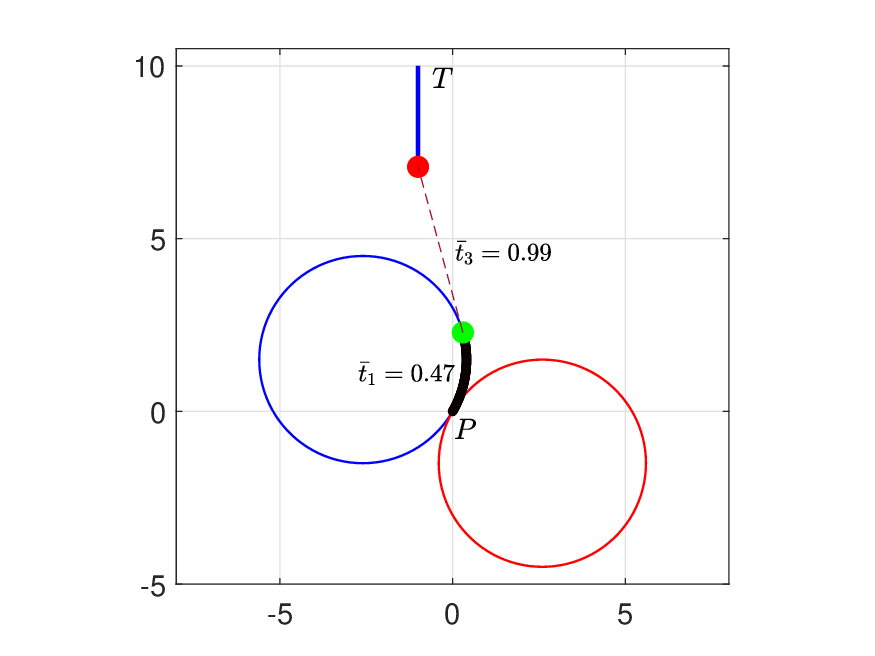} \\
{\footnotesize (b) Optimal path for pursuer $P(0,0, \pi/3)$ and target $T(-1,10, 3\pi/2)$}.
\\[5mm]
\caption{\sf\small Optimizing pursuer's path: exploring different target directions with radius $R=3$.}
\label{PurTar4}
\end{figure}
\newpage
\begin{remark}
    In our analysis, we aim to optimize the total physical distance traveled by both the pursuer and the target. However, we can also consider optimizing the time, focusing on reaching the interception of the target as quickly as possible. It's important to note that these two objectives don't always align, especially when acceleration is involved along the path. In our current model, we assume that both the pursuer and the target move with constant velocity. In this context, optimizing the length or optimizing the time both give us the same outcome.

\end{remark}

\subsection{Model Validation: Turning Radius via Speed Dynamics}

The turning radius of the pursuer depends on both its angular speed (the rate at which it turns or rotates) and its linear speed (its speed in a tangent line).
Specifically, the turning radius can be calculated using the relation $\ds R=\frac{|\mathbf{V}|}{|\dot{\theta}(t)|}$ where $|\mathbf{V}|$ is linear speed and $|\dot{\theta}(t)|$ is angular speed.
A higher angular velocity (faster turn) or a lower linear velocity (slower straight-line speed) will result in a smaller turning radius, meaning the object or vehicle can make a tighter turn. Conversely, a lower angular velocity or a higher linear velocity will result in a larger turning radius, requiring a wider turn.  Keeping this mind we do further analysis of Algorithm \eqref{algorithm} through the Example-2.

\noindent\textbf{Example 2}\label{example2} 

\noindent Let an initial position for the pursuer, denoted as ~$P(0,0, 2\pi/3)$ and a target point ~$T(-100,0,\pi/2)$. The objective is obtaining a minimum path $f$ chosen by \eqref{model_1}. Now we consider a combination of angular speeds: $|\dot{\theta}(t)|<1$, $|\dot{\theta}(t)|=1$, and $|\dot{\theta}(t)|>1$, with a fixed linear pursuer speed $|\mathbf{V_P}|=12$.

\noindent Based on the parameters mentioned above, the turning radius $R$ is calculated using the relation $\ds R=\frac{|\mathbf{V}|}{|\dot{\theta}(t)|}$, and optimal paths are obtained by model \eqref{model_1} which depicted in Figure \ref{PurTar5}. We observe that when $|\dot{\theta}(t)|<1$, the turning radius is $R=48$, and the optimal path length $f=159.65$  is achieved, covering one-third of the total length with the turning curve. Conversely, when $|\dot{\theta}(t)| >1$, the turning radius is $R=3$, and the optimal path length $f=156.04$ occurs when the pursuer turns left, but only $1/100$ of the total length is covered by the turning curve. These obtained results are tabulated in Table \ref{table:exp-5}, and the optimal paths for these different combinations can be seen in Figure \ref{PurTar5}. Overall, it appears that improved results are obtained when angular velocity is increased, resulting in a smaller turning radius.
  \par
	\begin{table}[!htbp]
		\caption{\small{\textit{Model Performance Variations with Changing Angular Speed  $|\dot{\theta}(t)|$. }}}
		\footnotesize
		\vskip 1.5em
		\centering
		\begin{tabular}{|c| c| c| c| c| c| c| c|c|c|c| }
  \hline
\multicolumn{1}{|c|}{$\mathbf{|V_P|}$, $|\dot{\theta}(t)|$ and $R$}& \multicolumn{4}{|c|}{Length} & \multicolumn{4}{|c|}{Time} & \multicolumn{2}{|c|}{Optimal} \\
\hline
			\hline
			 & Left  &  Right  & St.		    	& Target & Left& Right& St. & Target & Total & Total \\
			$\mathbf{|V_P|}=12$ &     turn		&   turn  	& line & St. line & turn& turn &line& St. line & length & time  \\
		$R=\mathbf{|V_P|}/|\dot{\theta}(t)|$	&$\bar{\xi}_1$ &     $\bar{\xi}_2$ 		&  $\bar{\xi}_3$   	& $\bar{\xi}_4$ & $\bar{t}_1$& $\bar{t}_2$ & $\bar{t}_3$ &$\bar{t}_4$ & $f$ & $t$ \\ [0.5ex]
			\hline
			$|\dot{\theta}(t)|=1/4$ $|$ $R=48$ & 34.31  & 0 & 78.39 & 46.96    &	2.86  & 0& 6.53 	& 9.39 &159.65 & 9.39 \\ [.5ex]\hline
			$|\dot{\theta}(t)|=1/1$ $|$ $R=12$ & 7.64 & 0   & 102.94 &46.08 & 0.64 &	0	& 8.58   &9.22 &156.66 & 9.22\\ [.5ex]\hline
		$|\dot{\theta}(t)|=4/1$ $|$ $R=3$ & 1.87 & 0   & 108.28 &45.89 & 0.16 &	0	& 9.02   &9.18 & 156.04&9.18\\ [.5ex]\hline
				
		\end{tabular}
		\label{table:exp-5}
	\end{table}

\begin{figure}[!htbp]
\hspace{-1cm}
\begin{minipage}{90mm}
\begin{center}
\hspace*{0cm}
\includegraphics[width=90mm]{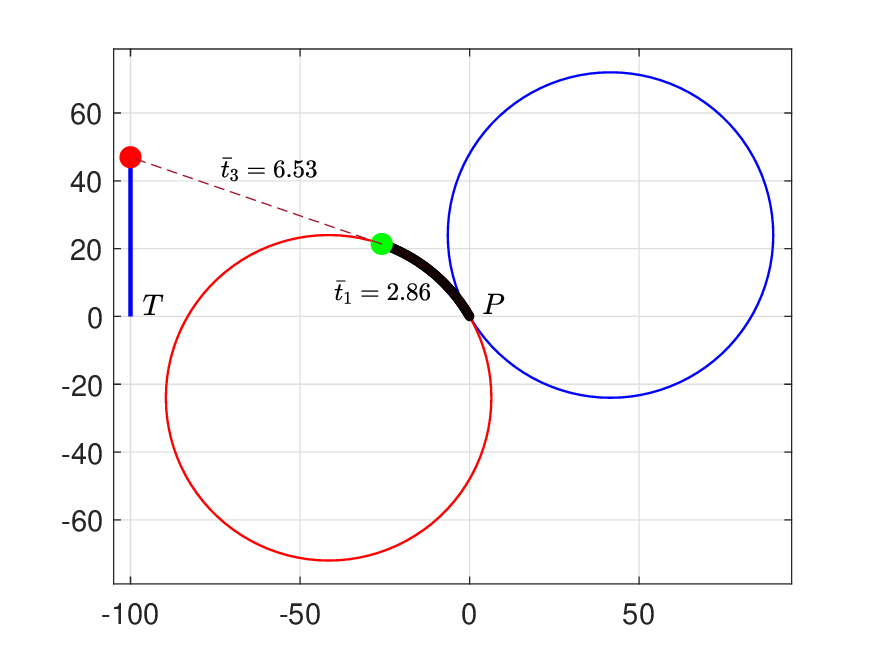} \\ \hspace{-3mm}
{\footnotesize (a) Optimal path when $|\dot{\theta}(t)| <1$ and $\mathbf{|V_P|}=12$.}\hspace{-3mm}
\end{center}
\end{minipage}
\hspace{-2mm}
\begin{minipage}{90mm}
\begin{center}
\hspace*{0cm}
\includegraphics[width=100mm]{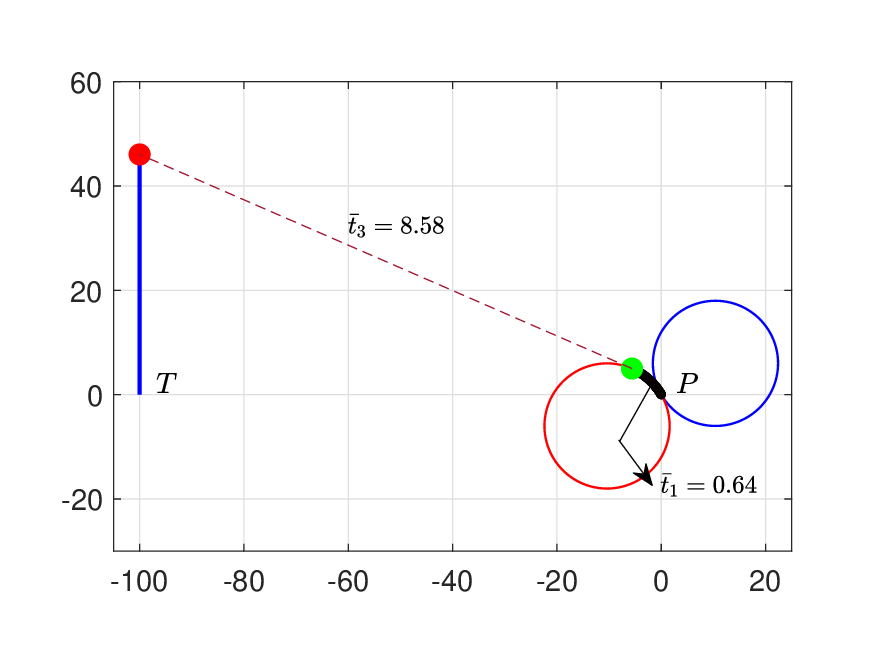} \\ \hspace{-2mm}
{\footnotesize (b) Optimal path when $|\dot{\theta}(t)| =1$ and $\mathbf{|V_P|}=12$.}\hspace{-2mm}
\end{center}
\end{minipage}
\\\hspace{-2mm}
\hspace*{-1cm}
\begin{minipage}{90mm}
\begin{center}
\hspace*{0cm}
\includegraphics[width=90mm]{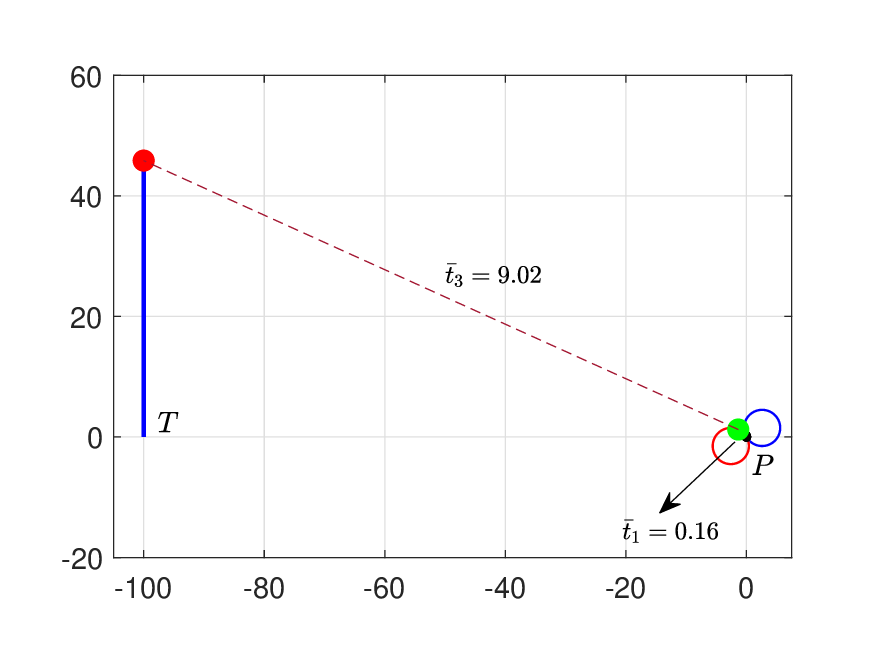} \\ \hspace{-2mm}
{\footnotesize (c) Optimal path when $|\dot{\theta}(t)| >1$ and $\mathbf{|V_P|}=12$.}
\end{center}
\end{minipage}
\vspace{.5cm}
\caption{\sf\small Optimizing the path of the pursuer, starting at position $P(0,0)$ with a heading angle of $\theta_P = 2\pi/3$, to intercept target $T(-100,0)$, which is located to the left of the pursuer and heading in the direction of $\theta_T = \pi/2$. }
\label{PurTar5}
\end{figure}

\noindent We also validated the proposed model in some special scenarios. For instance, when the target's starting position lies within the pursuer's turning radius. In this situation, according to Proposition \ref{turnradi}, the pursuer cannot intercept the target within the turning radius. However, interception occurs outside the circle when an appropriate velocity combination is used. The results for this case are presented in the 3rd row of Table \ref{table:exp-6} and demonstrated in Figure \ref{comparison}(a).

\noindent It is assumed that the pursuer's speed is strictly greater than the target's speed. However, this depends on the heading directions for the target trajectory and other parameters defined in the algorithm. To test this, an example is given for the proposed model, where the pursuer and target have the same velocities. The results for this case are provided in the 4th row of Table \ref{table:exp-6} and depicted in Figure \ref{comparison}(b).

\par
	\begin{table}[!htbp]
		\caption{\small{\textit{Special Cases. }}}
		\footnotesize
		\vskip 1.5em
		\centering
		\begin{tabular}{|c| c| c| c| c| c| c| c|c|c|c| }
  \hline
\multicolumn{1}{|c|}{$\mathbf{|V_P|}$, $\mathbf{|V_T|}$, }& \multicolumn{4}{|c|}{Length} & \multicolumn{4}{|c|}{Time} & \multicolumn{2}{|c|}{Optimal} \\
\multicolumn{1}{|c|}{ $|\dot{\theta}(t)|$ and $R$}& \multicolumn{4}{|c|}{} & \multicolumn{4}{|c|}{} & \multicolumn{2}{|c|}{} \\
\hline
			\hline
			 & Left  &  Right  & St.		    	& Target & Left& Right& St. & Target & Total & Total \\
			$R=\mathbf{|V_P|}/|\dot{\theta}(t)|$ &     turn		&   turn  	& line & St. line & turn& turn &line& St. line & length & time  \\
			&$\bar{\xi}_1$ &     $\bar{\xi}_2$ 		&  $\bar{\xi}_3$   	& $\bar{\xi}_4$ & $\bar{t}_1$& $\bar{t}_2$ & $\bar{t}_3$ &$\bar{t}_4$ & $f$ & $t$ \\ [0.5ex]
			\hline
             $\mathbf{|V_P|}=12$, $\mathbf{|V_T|}=4$&&&&&&&&&&\\
			$|\dot{\theta}(t)|=1/4$ $|$ $R=48$ & 46.71  & 0 & 32.04 & 26.26    &	3.89  & 0& 2.67 	& 6.56 &105.01 & 6.56 \\ [.5ex]\hline
			 $\mathbf{|V_P|}=8$, $\mathbf{|V_T|}=8$&&&&&&&&&&\\$|\dot{\theta}(t)|=1/6$ $|$ $R=48$ & 27.47 & 0   & 117.68 &145.18 & 3.43 &	0	& 14.71   &18.14 &290.33 & 18.14\\ [.5ex]\hline
				
		\end{tabular}
		\label{table:exp-6}
	\end{table}

\newpage

\subsection{Comparison with the Model Presented in \cite{Looker2008}.}

We compare the model \eqref{model_1} with the model presented in \cite{Looker2008} with identical parameter settings. The initial positions of the pursuer and target are \((1, 0)\) and \((2, 2)\),  with velocities of \(10\) and \(1\), and heading angles of $0$ and \(5\pi/4\), respectively. Since \cite{Looker2008} only reports the total time taken for target interception, we limit our comparison to this metric. Our proposed model achieves interception in \(0.24\) units of time, where as the model in \cite{Looker2008} requires \(2.37\) units. This represents a significant improvement over the method introduced in \cite{Looker2008}, clearly demonstrating its effectiveness in reducing time. Figure \ref{comparison}(c) illustrates the optimal path for the proposed model, which can be compared with Figure 4(b) in \cite{Looker2008}. Additionally, the proposed model's total path length for interception is  \(2.66\) units. Detailed results for the proposed model, including the length and time for each path segment, are presented in the 3rd row of Table \ref{table:exp-7}. 

\par
	\begin{table}[!htbp]
		\caption{\small{\textit{Model Performance Variations for Comparison. }}}
		\footnotesize
		\vskip 1.5em
		\centering
		\begin{tabular}{|c| c| c| c| c| c| c| c|c|c|c| }
  \hline
\multicolumn{1}{|c|}{$\mathbf{|V_P|}$, $\mathbf{|V_T|}$, }& \multicolumn{4}{|c|}{Length} & \multicolumn{4}{|c|}{Time} & \multicolumn{2}{|c|}{Optimal} \\
\multicolumn{1}{|c|}{ $|\dot{\theta}(t)|$ and $R$}& \multicolumn{4}{|c|}{} & \multicolumn{4}{|c|}{} & \multicolumn{2}{|c|}{} \\
\hline
			\hline
			 & Left  &  Right  & St.		    	& Target & Left& Right& St. & Target & Total & Total \\
			$R=\mathbf{|V_P|}/|\dot{\theta}(t)|$ &     turn		&   turn  	& line & St. line & turn& turn &line& St. line & length & time  \\
			&$\bar{\xi}_1$ &     $\bar{\xi}_2$ 		&  $\bar{\xi}_3$   	& $\bar{\xi}_4$ & $\bar{t}_1$& $\bar{t}_2$ & $\bar{t}_3$ &$\bar{t}_4$ & $f$ & $t$ \\ [0.5ex]
			\hline
		 $\mathbf{|V_P|}=10$, $\mathbf{|V_T|}=1$&&&&&&&&&&\\$|\dot{\theta}(t)|=10/1$ $|$ $R=1$ & 1.81 & 0   & 0.61 &0.24 & 0.18 &	0	& 0.06   &0.24 & 2.66&0.24\\ [.5ex]\hline
				
		\end{tabular}
		\label{table:exp-7}
	\end{table}

\begin{figure}[!htbp]
\hspace{-1cm}
\begin{minipage}{95mm}
\begin{center}
\hspace*{0cm}
\includegraphics[width=95mm]{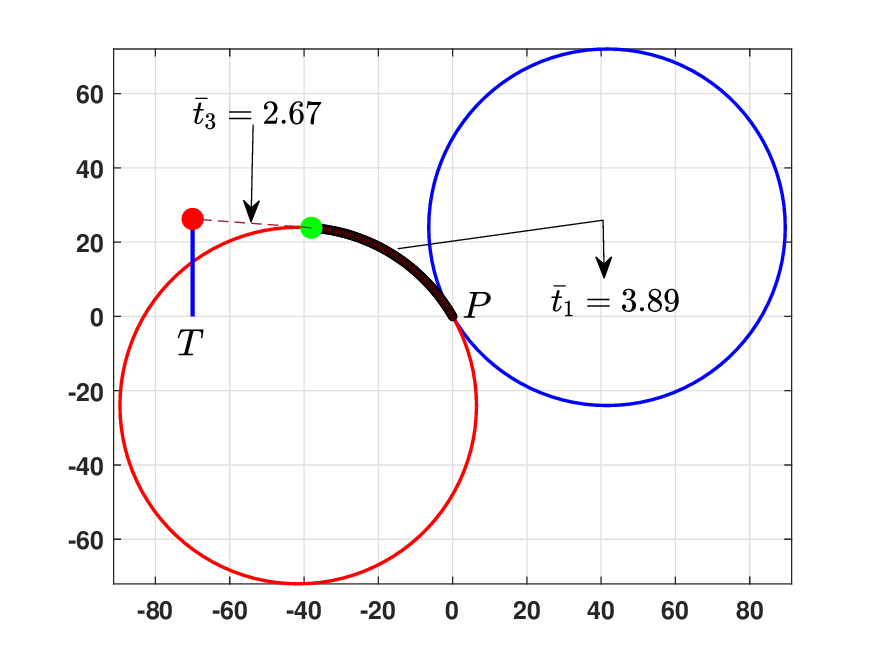} \\ \hspace{-3mm}
{\footnotesize (a) Optimal path when target inside \\ the turning circle.}\hspace{-3mm}
\end{center}
\end{minipage}
\hspace{-2mm}
\begin{minipage}{90mm}
\begin{center}
\hspace*{0cm}
\includegraphics[width=90mm]{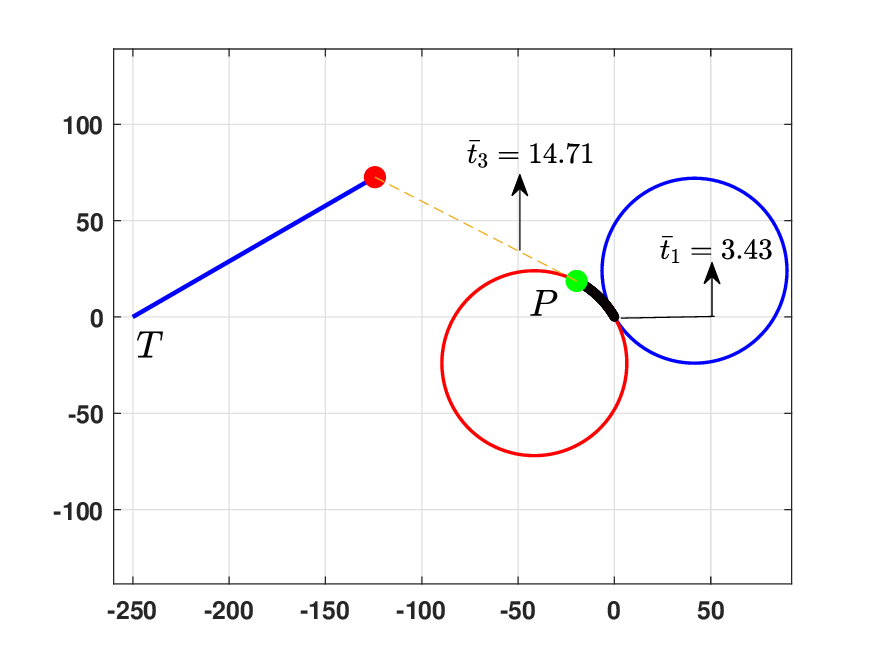} \\ \hspace{-2mm}
{\footnotesize (b) Optimal path for equal velocities of \\pursuer and target.}\hspace{-2mm}
\end{center}
\end{minipage}
\\\hspace{-2mm}
\hspace*{-1cm}
\begin{minipage}{95mm}
\begin{center}
\hspace*{0cm}
\includegraphics[width=95mm]{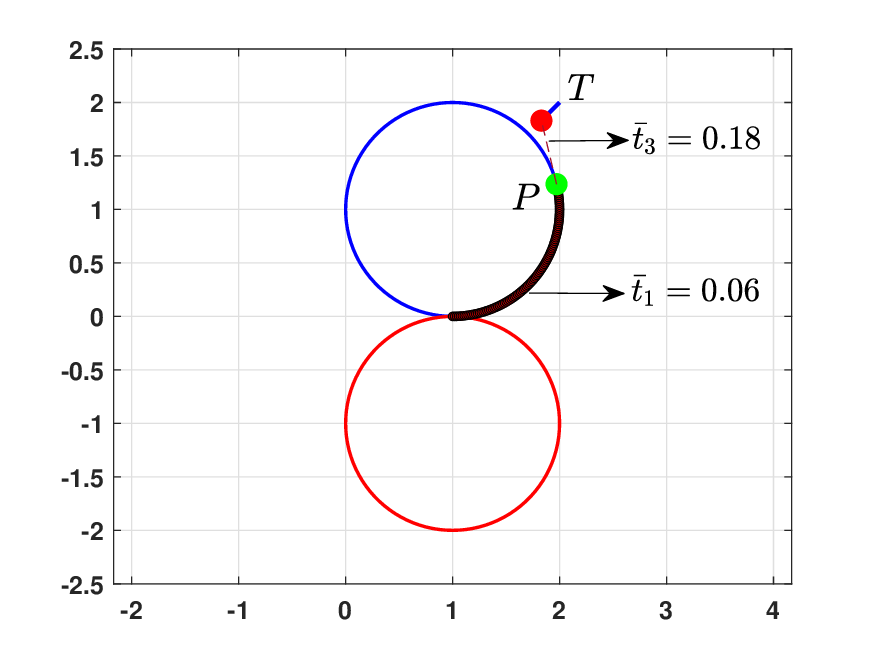} \\ \hspace{-2mm}
{\footnotesize (c) Optimal path for the example shown \\ in Figure 4(b) of \cite{Looker2008}.}
\end{center}
\end{minipage}
\vspace{.5cm}
\caption{\sf\small Optimizing the pursuer's path to intercept the target under various parameter combinations. }
\label{comparison}
\end{figure}


\newpage
\section{Conclusion}

\noindent In this paper, we introduced a mathematical model and an efficient algorithm to determine the shortest path for a pursuer intercepting a moving target with constant speed. Considering Dubin's path concepts, the proposed model combines circular curves and straight-line segments to minimize the pursuer's path length or interception time. We provided a geometric interpretation of the model and formulated it mathematically to identify optimal interception paths. The model's constraints were discussed through propositions, and an algorithm was developed to solve it. Extensive numerical experiments were conducted, considering diverse positions and heading angles for the pursuer and target, with results presented in tables and figures. Additionally, we compared the proposed model with an existing approach from \cite{Looker2008}, demonstrating its superior performance. The model exhibited a high convergence rate, efficiently approximating solutions across a wide range of parameters while requiring less computational time.

\noindent \textbf{Limitations and Future works.} While our study has made significant progress, there remain intriguing avenues for further exploration in this field. Several questions remain open for future research: One can investigate 
\begin{itemize}
    \item How does the model perform when the target's speed and direction change dynamically? Adapting the model for dynamic targets could enhance its applicability in real-world settings.
    \item How can the model be extended to incorporate obstacle avoidance strategies? Introducing fixed obstacles in the pursuit scenario could present complex challenges that warrant exploration.
    \item How does the model behave in scenarios involving multiple pursuers and targets? Investigating the interaction and coordination of multiple objects in pursuit situations could provide valuable insights.
\end{itemize}
\noindent Addressing the above open questions can further enhance the adaptability and applicability of our proposed model, paving the way for more sophisticated and effective pursuit strategies in diverse real-world scenarios.

\newpage
\noindent \textbf{\large {Acknowledgments:}} 

\noindent Moreover, we acknowledge Chatgpt(3.5) AI tools which is used to rephrase, edit and polish the authors written text for spelling, grammar, or general style.


\section{Statements $\&$ Declarations}
\textbf{Funding and/or Conflicts of interests/Competing interests:}

\noindent Masuda Akter would like to express her gratitude to Ministry of Science $\&$ Technology, Bangladesh for providing financial help in the form of NST fellowship with Reference no. 120005100-3821117, Reg. no. 6 $\&$ Session: 2022-2023.\par

\noindent Author Dr Mohammed Mustafa Rizvi and Dr Mohammad Forkan have no relevant financial or non-financial interests to disclose. Masuda Akter received grants to complete her research degree M Phil and the manuscript is part of her research work.

\noindent \textbf{Author Contributions:}
All authors contributed to the study conception and design. Material preparation, data collection and analysis were performed by Masuda Akter, Dr Mohammed Mustafa Rizvi and Dr Mohammad Forkan. All authors have the same contribution to prepare the first draft of the manuscript. All authors read and approved the final manuscript.” 
\end{document}